\documentclass[12pt,a4]{article}

\usepackage{latexsym}
\usepackage{amsmath}
\usepackage{amssymb}
\usepackage{amsthm}
\usepackage{amscd}

\setlength{\topmargin}{0in}
\setlength{\oddsidemargin}{0.35in}
\setlength{\evensidemargin}{0.35in}
\setlength{\textwidth}{5.7in}
\setlength{\textheight}{8.7in}
\setlength{\parskip}{3mm}

\newtheorem{theorem}{Theorem}[section]
\newtheorem{lemma}[theorem]{Lemma}
\newtheorem{claim}[theorem]{Claim}

\newtheorem{proposition}[theorem]{Proposition}

\theoremstyle{definition}

\newtheorem{remark}[theorem]{Remark}

\theoremstyle{remark}

\def\al{\alpha}
\def\bt{\beta}

\def\dl{\delta}
\def\ve{\varepsilon}

\def\la{\lambda}

\def\cC{{\cal C}}
\def\cM{{\cal M}}
\def\cQ{{\cal Q}}
\def\cQmu{{\cal Q}(\mu)}

\def\cMpq{\cM^p_q}
\def\cCpq{\cC^p_q}

\newcommand{\N}{{\bf N}}

\newcommand{\R}{{\bf R}}

\def\l{\left}
\def\r{\right}
\def\ds{\displaystyle}
\def\scr{\scriptstyle}
\def\eqref#1{{\rm(\ref{#1})}}

\newcommand{\lft}[1]{\lefteqn{#1}}
\newcommand{\supp}{\,{\rm supp}\,}

\begin{document}

\title{Equivalent norms for the Morrey spaces\\ 
with non-doubling measures}

\maketitle

\begin{center}
\author{
{\it Yoshihiro Sawano, Hitoshi Tanaka}\footnote{
The first author is supported by Research Fellowships
of the Japan Society for the Promotion
of Science for Young Scientists.
The second author is supported by the 21st century COE program 
at Graduate School of Mathematical Sciences, 
the University of Tokyo 
and by F\=ujyukai foundation.
} \\
{\it Graduate School of Mathematical Sciences, 
The University of Tokyo, \\ 
3-8-1 Komaba, Meguro-ku
Tokyo 153-8914, JAPAN
} \\
E-mail: yosihiro@ms.u-tokyo.ac.jp, htanaka@ms.u-tokyo.ac.jp
}
\end{center}

\begin{abstract}
In this paper under some growth condition
we investigate the connection 
between RBMO and the Morrey spaces.
We do not assume
the doubling condition
which has been a key property
of harmonic analysis.
We also obtain another type of equivalent norms.
\end{abstract}

\noindent{\bf KEYWORDS :}\ \
Morrey space, \ Campanato space, \ equivalent norms

\noindent{\bf AMS Subject Classification :}\ \
Primary 42B35, Secondary 46E35.

\section{Introduction}
\noindent

In this paper we discuss 
equivalent norms for the (vector-valued) Morrey spaces 
with non-doubling measures.
We consider the connection 
between the Morrey spaces and the Campanato spaces 
with underlying measure $\mu$ non-doubling.
The Morrey spaces appeared  in \cite{Mo} originally 
in connection with the partial differential equations and 
the Campanato spaces in \cite{Ca1} and \cite{Ca2}.
We refer to \cite{Na} for the result 
of Morrey spaces coming with the doubling measures.
Before we state our main theorem,
let us make a brief view of the terminology
of measures on $\R^d$.
We say that 
a (positive) Radon measure $\mu$ on $\R^d$ 
satisfies the growth condition if 
\begin{equation} \label{growth}
\mu(Q(x,l)) \le C_0\,l^n
\mbox{ for all $x \in \supp(\mu)$ and $l>0$},
\end{equation}
where $C_0$ and $n \in (0,d]$ are some fixed numbers. 
A measure $\mu$ is said to satisfy the doubling condition if 
\[
\mu(Q(x,2l)) \le C\,\mu(Q(x,l))
\mbox{ for all $x \in \R^d$ and $l>0$}
\]
for some constant $C>0$. 
A measure $\mu$ 
which satisfies the growth condition 
will be called growth measure 
while 
a measure $\mu$ 
with the doubling condition 
will be called the doubling measure. 

By a ^^ ^^ cube" $Q \subset \R^d$ 
we mean a closed cube 
having sides parallel to the axes. 
Its center will be denoted by $z_Q$ and 
its side length by $\ell(Q)$. 
By $Q(x,l)$ we will also denote the cube 
centered at $x$ of sidelength $l$. 
For $\rho>0$, 
$\rho\,Q$ means a cube concentric to $Q$ with 
its sidelength $\rho\,\ell(Q)$. 
Let $\cQ(\mu)$ denote the set of all cubes $Q \subset \R^d$ 
with positive $\mu$-measures. 
If $\mu$ is finite, 
we include $\R^d$ in $\cQ(\mu)$ as well. 
In \cite{SaTa1}, the authors defined 
the Morrey spaces $\cMpq(k,\mu)$ for non-doubling measures 
normed by 
\[
\| f \, : \, \cMpq(k,\mu)\|
:=
\sup_{Q \in \cQmu}
\mu(k\,Q)^{\frac{1}{p}-\frac{1}{q}}
\l(\int_Q|f|^q\,d\mu\r)^{\frac{1}{q}},
\, 1 \le q \le p <\infty,
\, k>1.
\]
The fundamental property of this norm is
\[
\|f \, : \, \cMpq(k_1,\mu)\|
\le 
\|f \, : \, \cMpq(k_2,\mu)\|
\le C_d\l(\frac{k_1-1}{k_2-1}\r)^d
\|f \, : \, \cMpq(k_1,\mu)\|
\]
for $1<k_1<k_2<\infty$.
With this relation in mind, 
we will denote $\cMpq(\mu)=\cMpq(2,\mu)$.
The aim of this paper is 
to find some norms equivalent to this Morrey norm. 

\section{Equivalent norm of doubling type}
\noindent

In this section 
we investigate an equivalent norm related to the doubling cubes.
Although we now envisage the non-homogeneous setting,
we are still able to place ourselves 
in the setting of the doubling cubes.
In \cite{To2}, 
Tolsa defined the notion of doubling cubes. 
Let $k,\bt>1$.
We say that $Q \in \cQmu$ is a $(k,\bt)$-doubling cube, if 
$\mu(kQ) \le \bt\,\mu(Q)$.
It is well-known that, if $\bt>k^d$,
then for $\mu$-almost all $x \in \R^d$ 
and for all $Q \in \cQmu$ centered at $x$,
we can find a $(k,\bt)$-doubling cube
from $k^{-1}Q,k^{-2}Q,\ldots$.
In what follows
we denote by $\cQ(\mu;k,\bt)$
the set of all $(k,\bt)$-doubling cubes in $\cQmu$.
We fix $k,\bt>1$ with $\bt>k^d$.
Let $1 \le q \le p<\infty$. 
For $f \in L^1_{loc}(\mu)$ define 
\[
\|f \, : \, \cMpq(\mu)\|_d
:=
\sup_{Q \in \cQ(\mu;k,\bt)}
\mu(Q)^{\frac{1}{p}-\frac{1}{q}}
\l(\int_Q |f(y)|^q\,d\mu(y)\r)^\frac{1}{q}.
\]
Now we present the main theorem in this section.
\begin{theorem} \label{thm1}
Let $\mu$ be a Radon measure 
which does not necessarily satisfy 
the growth condition nor the doubling condition 
and let $1 \le q<p<\infty$. 
If $\ds\bt>k^{\frac{dpq}{p-q}}$, 
then 
\[
C^{-1}\,\| f \, : \, \cMpq(\mu)\|_d 
\le \| f \, : \, \cMpq(\mu)\| \le
C\,\| f \, : \, \cMpq(\mu)\|_d, 
\,f \in \cMpq(\mu),
\]
for some constant $C>0$.
\end{theorem}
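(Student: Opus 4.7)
The plan is to establish the two inequalities separately. The forward direction $\|f \, : \, \cMpq(\mu)\|_d \le C\|f \, : \, \cMpq(\mu)\|$ is straightforward: for any $(k,\bt)$-doubling cube $Q$, the relation $\mu(kQ)\le\bt\,\mu(Q)$ combined with $\tfrac{1}{p}-\tfrac{1}{q}\le 0$ gives $\mu(Q)^{\tfrac{1}{p}-\tfrac{1}{q}}\le\bt^{\tfrac{1}{q}-\tfrac{1}{p}}\mu(kQ)^{\tfrac{1}{p}-\tfrac{1}{q}}$, so the doubling Morrey quantity at $Q$ is bounded by $\bt^{\tfrac{1}{q}-\tfrac{1}{p}}\|f \, : \, \cMpq(k,\mu)\|$, and one finishes by invoking the equivalence $\|f \, : \, \cMpq(k,\mu)\|\le C_{d,k}\|f \, : \, \cMpq(\mu)\|$ recorded in the introduction.

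For the reverse direction, I would fix $Q\in\cQmu$ and construct a covering of $Q\cap\supp(\mu)$ by doubling subcubes. For $\mu$-a.e.\ $x\in Q\cap\supp(\mu)$, applying the fact stated at the opening of this section to the cube $kQ(x,\ell(Q))$ centered at $x$ yields a smallest integer $J(x)\ge 0$ for which $Q_x:=k^{-J(x)}Q(x,\ell(Q))$ is $(k,\bt)$-doubling; since $x\in Q$, the inclusions $Q_x\subset Q(x,\ell(Q))\subset 2Q$ are immediate. Because $k^{-j}Q(x,\ell(Q))$ is non-doubling for $0\le j<J(x)$, telescoping $\mu(k^{-j+1}Q(x,\ell(Q)))>\bt\,\mu(k^{-j}Q(x,\ell(Q)))$ over $j=1,\ldots,J(x)-1$ would yield
\[
\mu(Q_x)\le\mu(kQ_x)<\bt^{1-J(x)}\mu(Q(x,\ell(Q)))\le\bt^{1-J(x)}\mu(2Q).
\]
I would then extract a Besicovitch subcover $\{Q_{x_i}\}$, decomposable into $N_d$ classes of pairwise-disjoint cubes, that still covers $Q\cap\supp(\mu)$ modulo a $\mu$-null set.

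The doubling of each $Q_{x_i}$ gives $\int_{Q_{x_i}}|f|^q\,d\mu\le\mu(Q_{x_i})^{q\sigma}\|f \, : \, \cMpq(\mu)\|_d^q$ with $\sigma=\tfrac{1}{q}-\tfrac{1}{p}$. Within a single disjoint class, the cubes at level $J(x_i)=m$ have side $\ell(Q)/k^m$ and sit inside $2Q$, so their number is at most $(2k^m)^d$ by a volume count; combined with the previous measure bound this yields
\[
\sum_{i\,:\,J(x_i)=m}\mu(Q_{x_i})^{q\sigma}\le (2k^m)^d\,\bt^{(1-m)q\sigma}\mu(2Q)^{q\sigma}.
\]
Summing over $m\ge 0$ would produce a geometric series with ratio $k^d\bt^{-q\sigma}$; since $q\sigma=(p-q)/p$, the hypothesis $\bt>k^{dpq/(p-q)}$ gives $\bt^{q\sigma}>k^{dq}\ge k^d$, so the series converges. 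Taking $q$-th roots and the supremum over $Q\in\cQmu$ would finish the argument. The principal obstacle is exactly this balance: the count $(2k^m)^d$ of level-$m$ subcubes grows exponentially, while each such cube's measure decays like $\bt^{-m}$, and the stated hypothesis on $\bt$ is calibrated precisely to tip that balance toward convergence.
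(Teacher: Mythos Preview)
Your argument is correct and follows essentially the same strategy as the paper: cover $Q$ by the maximal $(k,\bt)$-doubling shrinkings $Q_x$ of cubes centered at points of $Q$, invoke Besicovitch, count cubes at each scale by a volume argument, and sum a geometric series whose convergence is guaranteed by the hypothesis on $\bt$. The only notable difference is that you sum the $q$-th powers $\int_{Q_{x_i}}|f|^q\,d\mu$ directly, whereas the paper sums the quantities $\bigl(\int_R|f|^q\,d\mu\bigr)^{1/q}$; your organization is slightly sharper and in fact shows convergence already under the weaker assumption $\bt>k^{dp/(p-q)}$, although you (correctly) only verify the stated hypothesis $\bt>k^{dpq/(p-q)}$.
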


Before we come to the proof of Theorem \ref{thm1},
two clarifying remarks may be in order.

\begin{remark} 
If $p=q$, this theorem fails in general.
However, 
if we assume the growth condition or the doubling condition, 
the theorem is still available for $p=q$.
In fact, 
under the growth condition or the doubling condition 
for any cube $Q \in \cQmu$ 
we can find a large integer $j \gg 1$
such that $2^jQ \in \cQ(\mu;k,\bt)$.
\end{remark}

\begin{remark} 
This theorem readily extends to the vector-valued version.
Let $1 \le q \le p<\infty$ and $r \in (1,\infty)$. 
We define the vector-valued Morrey spaces $\cMpq(l^r,\mu)$ 
by the set of sequences of $\mu$-measurable functions $\{f_j\}_{j \in \N}$
for which 
\[
\|f_j \, : \, \cMpq(l^r,\mu)\|
:=
\sup_{Q \in \cQ(\mu)}
\mu(2Q)^{\frac{1}{p}-\frac{1}{q}}
\l(\int_Q\|f_j \, : \, l^r\|^q\,d\mu\r)^\frac{1}{q}
<\infty.
\]
The theorem can be extended to the vector valued version. Let 
\[
\|f_j \, : \, \cMpq(l^r,\mu)\|_d
:=
\sup_{Q \in \cQ(\mu;k,\bt)}
\mu(Q)^{\frac{1}{p}-\frac{1}{q}}
\l(\int_Q \| f_j(y) \, : \, l^r \|^q\,d\mu(y)\r)^{\frac{1}{q}}.
\]
Then
$\ds
C^{-1}\,\| f_j \, : \, \cMpq(l^r,\mu)\|_d
\le
\| f_j \, : \, \cMpq(l^r,\mu)\|
\le
C\,\| f_j \, : \, \cMpq(l^r,\mu)\|_d.
$
The same proof as the scalar-valued spaces 
works for the vector-valued spaces,
so in the actual proof 
we concentrate on the scalar-valued cases.
\end{remark}

\begin{proof}
Given $k>1$, we shall prove 
\[
C^{-1}\,\| f \, : \, \cMpq(\mu)\|_d
\le \| f \, : \, \cMpq(k,\mu)\|,
\quad
\| f \, : \, \cMpq(\mu)\|
\le C\,\| f \, : \, \cMpq(\mu)\|_d
\]
for large $\bt>0$.
The left inequality is obvious, so let us prove the right inequality.
We have only to show that, 
for every cube $Q \in \cQmu$,
\[
\mu(2Q)^{\frac{1}{p}-\frac{1}{q}}
\l(\int_Q |f(y)|^q\,d\mu(y)\r)^\frac{1}{q}
\le C\,\| f \, : \, \cMpq(\mu)\|_d.
\]

Let $x \in Q \cap \supp(\mu)$ and 
$Q(x)$ the largest doubling cube 
centered at $x$ and having sidelength $k^{-j}\ell(Q)$
for some $j \in \N$.
Existence of $Q(x)$ can be ensured 
for $\mu$-almost all $x \in \R^d$.
Set 
\[
\cQ_0(j)
:=
\{Q(x) \, : \, \ell(Q(x))=k^{-j}\ell(Q)\},
\, j \in \N.
\]
By Besicovitch's covering lemma 
we can take $\cQ(j) \subset \cQ_0(j)$ 
so that
$\ds\sum_{R \in \cQ(j)}\chi_R \le 4^d\chi_{2Q}$
and that $\ds x \in \bigcup_{R \in \cQ(j)}R$
for $\mu$-almost all $x \in Q$ with 
$\ell(Q(x))=k^{-j}\ell(Q)$.
Volume argument gives us that 
$\sharp(\cQ(j)) \le 8^d\,k^{jd}$.
Since
$\ds
\l(\int_Q |f(y)|^q\,d\mu(y)\r)^{\frac{1}{q}}
\le \sum_{j=1}^{\infty} \sum_{R \in \cQ(j)}
\l(\int_R |f(y)|^q\,d\mu(y)\r)^{\frac{1}{q}}
$
and $\mu(R) \le \bt^{-j}\mu(2Q)$ 
for all $R \in \cQ(j)$, we have
\begin{eqnarray*}
\lft{
\mu(2Q)^{\frac{1}{p}-\frac{1}{q}}
\l(\int_Q |f(y)|^q\,d\mu(y)\r)^\frac{1}{q}
\le 
\sum_{j=1}^{\infty} 
\bt^{j\l(\frac{1}{p}-\frac{1}{q}\r)}
\sum_{R \in \cQ(j)}
\mu(R)^{\frac{1}{p}-\frac{1}{q}}
\l(\int_R |f(y)|^q\,d\mu(y)\r)^\frac{1}{q}
}\\ &\le& 
\sum_{j=1}^{\infty} 
8^d\,k^{jd}\,\bt^{j\l(\frac{1}{p}-\frac{1}{q}\r)}
\|f \, : \, \cMpq(\mu)\|_d
\\ &=& 
\sum_{j=1}^{\infty} 
8^d\,\exp\l\{j\l(d\log k+\l(\frac{1}{p}-\frac{1}{q}\r)\log\bt\r)\r\}\,
\|f \, : \, \cMpq(\mu)\|_d
\, \le \, C\,\|f \, : \, \cMpq(\mu)\|_d,
\end{eqnarray*}
where the constant $C$ is finite, provided 
$\ds\bt>k^{\frac{dpq}{p-q}}$. 
\end{proof}

\section{Equivalent norms of Campanato type}
\noindent

Throughout the rest of this paper 
we assume that $\mu$ satisfy the growth condition \eqref{growth}. 
We do not assume that $\mu$ is doubling. 
Before we formulate our theorems,
let us recall the definition of the RBMO spaces 
due to Tolsa \cite{To2}.
Given two cubes $Q \subset R$ with $Q \in \cQmu$, 
we denote 
\[
\dl(Q,R)
:=
\int_{\ell(Q)}^{\ell(Q_R)}
\frac{\mu(Q(z_Q,l))}{l^n}\,\frac{dl}{l},
\quad
K_{Q,R}=1+\dl(Q,R),
\]
where $Q_R$ denotes 
the smallest cube concentric to $Q$ containing $R$. 
Here and below we abbreviate 
the $(2,2^{d+1})$-doubling cube to the doubling cube 
and $\cQ(\mu;2,2^{d+1})$ to $\cQ(\mu,2)$. 
Given $Q \in \cQ(\mu)$, 
we set $Q^*$ as the smallest doubling cube 
$R$ of the form $R=2^jQ$ with 
$j=0,1,\ldots$.\footnote{
By the growth condition \eqref{growth} 
there are a lot of big doubling cubes. 
Precisely speaking, 
given a cube $Q \in \cQ(\mu)$, 
we can find $j \in \N$ with $2^jQ \in \cQ(\mu,2)$
(see \cite{To2}). 
}

Tolsa defined a new BMO for the growth measures,
which is suitable for the Calder\'{o}n-Zygmund theory. 
We say that $f \in L^1_{loc}(\mu)$ 
is an element of RBMO if it satisfies 
\[
\|f \|_*:=
\sup_{Q \in \cQ(\mu)}
\frac{1}{\mu\l(\frac{3}{2}Q\r)}
\int_Q |f(x)-m_{Q^*}(f)|\,d\mu(x)
+
\sup_{\scr Q \subset R \atop \scr Q,R \in \cQ(\mu,2)}
\frac{|m_Q(f)-m_R(f)|}{K_{Q,R}}
<\infty,
\]
where 
$\ds m_Q(f):=\frac{1}{\mu(Q)} \int_Qf(y)\,d\mu(y)$. 
Further details may be found in \cite[Section 2]{To2}.
The following lemma is due to Tolsa. 

\begin{lemma} \label{lm1}
{\rm \cite[Corollary 3.5]{To2}}
Let $f \in$RBMO. 
\begin{enumerate}
\item
There exist positive constants $C$ and  $C'$ 
independent of $f$ so that,
for every $\la>0$ and every cube $Q \in \cQmu$, 
\[
\mu\{x \in Q \, : \, 
|f(x)-m_{Q^*}(f)|>\la\}
\le C\,
\mu\l(\frac{3}{2}Q\r)\,
\exp\l(-\frac{C'\la}{\|f\|_*}\r).
\]
\item
Let $1 \le q <\infty$.
Then there exists a constant $C$ independent of $f$,
so that,
for every cube $Q \in \cQmu$,
\[
\l(\frac{1}{\mu\l(\frac{3}{2}Q\r)}
\int_Q |f(x)-m_{Q^*}(f)|^q
\,d\mu(x)\r)^{\frac{1}{q}}
\le C\,\|f\|_*.
\]
\end{enumerate}
\end{lemma}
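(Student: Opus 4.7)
The plan is to deduce (2) from (1) by a layer-cake computation, and to prove (1) by a Calderón-Zygmund style stopping-time iteration adapted to the non-doubling setting. For (2), writing
\[
\int_Q |f(x)-m_{Q^*}(f)|^q\,d\mu(x) = q\int_0^\infty \lambda^{q-1}\mu\{x \in Q : |f(x)-m_{Q^*}(f)|>\lambda\}\,d\lambda
\]
and substituting the exponential bound from (1) turns the right-hand side into a Gamma integral of the form $\int_0^\infty \lambda^{q-1}e^{-C'\lambda/\|f\|_*}\,d\lambda = \Gamma(q)(\|f\|_*/C')^q$, so that $\int_Q |f-m_{Q^*}(f)|^q\,d\mu \le C(q)\,\mu(\frac{3}{2}Q)\,\|f\|_*^q$, and the $q$-th root is (2).

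For (1), after the homogeneity reduction $\|f\|_* = 1$, I would fix $Q \in \cQmu$ and a large threshold $A$ to be chosen. The heart of the argument is a stopping-time selection: for $\mu$-almost every $x$ in the super-level set $\{x \in Q : |f(x)-m_{Q^*}(f)|>A\}$, pick a doubling cube $R_x$ centered at $x$ at the first scale on which the average of $|f-m_{Q^*}(f)|$ exceeds $A$. Besicovitch's covering lemma extracts an almost-disjoint subfamily $\{Q_i\}$ covering the super-level set modulo a $\mu$-null set and enjoying two key properties: (a) a mass bound $\sum_i \mu(Q_i) \le \theta\,\mu(\frac{3}{2}Q)$ with $\theta<1$ once $A$ is large, coming from the $L^1$ half of $\|\cdot\|_*$; and (b) a mean deviation estimate $|m_{Q_i^*}(f) - m_{Q^*}(f)| \le C_0 A$, coming from the doubling-pair half of $\|\cdot\|_*$ applied along a chain of doubling cubes joining $Q_i^*$ to $Q^*$. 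Iterating the decomposition inside each $Q_i$ and summing a geometric series yields $\mu\{x \in Q : |f(x)-m_{Q^*}(f)|>kA\} \le C\theta^k\mu(\frac{3}{2}Q)$ for every integer $k\ge 0$, which is exactly the exponential bound of (1) with $C' = A^{-1}\log(1/\theta)$.

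The main obstacle is step (b): the deviation $|m_{Q_i^*}(f) - m_{Q^*}(f)|$ must be telescoped along a chain of doubling cubes connecting $Q_i^*$ to $Q^*$, and the sum of the $K_{\cdot,\cdot}$ coefficients along that chain must be kept of order $A$ rather than blowing up as $Q_i$ shrinks. This is exactly where the growth condition \eqref{growth} and Tolsa's construction of doubling hulls become essential, and it is the step that has no direct classical analogue in the John-Nirenberg proof; the $L^1$ half of the stopping argument and the final iteration are, by contrast, fairly routine once (b) is in hand.
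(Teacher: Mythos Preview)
The paper does not give its own proof of this lemma; it is quoted verbatim from Tolsa \cite[Corollary~3.5]{To2} and used as a black box. Consequently there is no ``paper's proof'' to compare against. Your outline is essentially the strategy of Tolsa's original argument: a John--Nirenberg iteration adapted to the non-doubling setting, with (2) following from (1) by the layer-cake formula. That derivation of (2) from (1) is entirely correct as written.

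One small imprecision in your sketch of (1): in the stopping-time step the selected cubes $Q_i$ are \emph{not} themselves required to be doubling. In Tolsa's proof one stops at the first (not necessarily doubling) cube $Q_x$ centered at $x$ on which the normalized average $\mu(\tfrac{3}{2}Q_x)^{-1}\int_{Q_x}|f-m_{Q^*}(f)|\,d\mu$ exceeds the threshold; the passage to the doubling hull $Q_i^*$ occurs only when comparing means in step~(b). The control of $|m_{Q_i^*}(f)-m_{Q^*}(f)|$ then comes from combining the smallness of the average on the \emph{parent} of $Q_i$ (which gives $|m_{Q_i^*}(f)-m_{Q^*}(f)|\lesssim A$ after one application of the $K_{Q,R}$-half of the RBMO norm and the bound $K_{Q_i^*,Q^*}\le C$), rather than from telescoping along an arbitrarily long chain. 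So the quantity you worry about in your final paragraph is in fact bounded by a single absolute constant times $A$, not merely ``of order $A$'' after a delicate summation; this is precisely what Tolsa's Lemma on $K_{Q,R}$ for nested doubling hulls guarantees, and it removes the obstacle you flag. With that adjustment your plan is sound.
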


\paragraph{Elementary property of $\dl(\cdot,\cdot)$}\ \ 
Below we list elementary properties of $\dl(\cdot,\cdot)$ 
used in this paper.

\begin{lemma} \label{lm2}
Let $Q \in \cQmu$.
Then the following properties hold {\rm :}
\begin{itemize} \item[{\rm(1)}] 
For $\rho>1$, 
we have 
$\ds
\dl(Q,\rho Q) \le C_0\,\log\rho. 
$
\item[{\rm(2)}]  
$\ds
\dl(Q,Q^*) \le C_0\,2^{n+1}\,\log2.
$
\item[{\rm(3)}]
Let $k_0 \in \N$ and $\al>0$. 
Assume, for some $\theta>0$, 
$\ds
\al \le \mu(Q) \le \mu(2^{k_0}Q) \le \theta\,\al.
$
Then 
$\ds
\dl(Q,2^{k_0}Q) \le 2^n\,\log2\cdot\theta\,C_0\,c_n,
$
where 
$\ds 
c_n:=\sum_{k=0}^{\infty}2^{-nk}.
$
\item[{\rm(4)}]
Given the cubes $P \subset Q \subset R$ with $P \in \cQmu$, 
then 
\[
\l|\dl(P,R)-\l(\dl(P,Q)+\dl(Q,R)\r)\r| \le C,
\]
where $C$ is a constant depending only on 
$C_0,n,d$. 
\item[{\rm(5)}]
Let $Q,R \in \cQmu$.
Suppose, for some constant $c_1>1$, 
$Q \subset R$ and $\ell(R) \le c_1\,\ell(Q)$. 
Then there exists a doubling cube $S \in \cQ(\mu,2)$ such that 
$Q^*,R^* \subset S$ and 
$\dl(Q^*,S),\dl(R^*,S) \le C$, 
where $C$ is a constant depending only on 
$c_1,C_0,n,d$. 
\end{itemize}
\end{lemma}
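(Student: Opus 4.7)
The plan is to estimate each defining integral of $\dl$ directly. Throughout, if $z_Q\notin\supp(\mu)$ I pass to the cube $Q(x,2\ell(Q))$ for some $x\in Q\cap\supp(\mu)$, which costs at most a factor $2^n$ in the growth constant $C_0$.

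For (1), $Q_{\rho Q}=\rho Q$ and the growth bound $\mu(Q(z_Q,l))/l^n\le C_0$ give $\dl(Q,\rho Q)\le C_0\log\rho$ at once. For (2), write $Q^*=2^j Q$: non-doubling of $2^i Q$ for $0\le i<j$ forces $\mu(2^{i+1}Q)>2^{d+1}\mu(2^i Q)$, so the normalized ratio $\mu(2^k Q)/(2^k\ell(Q))^n$ grows by a factor at least $2^{d+1-n}$ per step on $0\le k\le j$ and is $\le C_0$ at $k=j$; dyadic splitting of the integral and summing the resulting reverse geometric series yield the bound, the extreme case $n=d$ producing exactly the factor $2^{n+1}$. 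For (3), I bound $\mu(Q(z_Q,l))\le\mu(2^{k_0}Q)\le\theta\al$ throughout and $l\ge 2^k\ell(Q)$ on each dyadic annulus; the series $\sum_{k\ge 0}2^{-nk}=c_n$ gives $\dl(Q,2^{k_0}Q)\le\theta\al\log 2\cdot c_n/\ell(Q)^n$, and the growth condition $\al\le\mu(Q)\le 2^n C_0\ell(Q)^n$ delivers the claimed constant.

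Part (4) is the crux. The identity $\dl(P,R)-\dl(P,Q)=\int_{\ell(P_Q)}^{\ell(P_R)}\mu(P(z_P,l))\,l^{-n-1}\,dl$ reduces (4) to comparing this tail integral with $\dl(Q,R)=\int_{\ell(Q)}^{\ell(Q_R)}\mu(Q(z_Q,l))\,l^{-n-1}\,dl$. Because $P\subset Q\subset R$, the centers $z_P,z_Q$ differ by at most $\ell(Q)/2$, so $\ell(Q)\le\ell(P_Q)\le 2\ell(Q)$ and $\ell(Q_R)\le\ell(P_R)\le 2\ell(Q_R)$, while $P(z_P,l)$ and $Q(z_Q,l)$ are each contained in the other after inflating $l$ additively by $\ell(Q)$. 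These relations reduce the error to boundary integrals of $\mu(P(z_P,l))/l^{n+1}$ over $\log$-intervals of $O(1)$ length, which the growth condition controls by a constant depending only on $C_0,n,d$. The careful bookkeeping of centers and endpoints is the main obstacle of the whole lemma.

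For (5), let $S_0$ be the smallest cube concentric with $Q$ containing both $Q^*$ and $R^*$, and set $S=(S_0)^*$; $S$ is doubling by construction. By (4), $\dl(Q^*,S)\le\dl(Q^*,S_0)+\dl(S_0,S)+O(1)$, with $\dl(S_0,S)=O(1)$ by (2). In the main regime $\ell(R^*)\lesssim\ell(Q^*)$ one has $\ell(S_0)=O(\ell(Q^*))$, so (1) gives $\dl(Q^*,S_0)=O(1)$; the same reasoning applied to $R^*$ controls $\dl(R^*,S)$. In the complementary regime $\ell(R^*)\gg\ell(Q^*)$ a bounded dilate of $R^*$ already contains $Q^*$, and the argument reduces to the previous case with $S$ a doubling dilate of $R^*$. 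Once (4) is in hand, (5) is a routine composition of (1), (2), and (4).
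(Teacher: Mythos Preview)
Your treatment of (1)--(3) matches the paper's. The real issues are in (4) and, to a lesser extent, (5).

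\textbf{Part (4).} Your reduction is not complete. After observing that $Q(z_P,l)\subset Q(z_Q,l+\ell(Q))$ and vice versa, you assert that ``these relations reduce the error to boundary integrals of $\mu(Q(z_P,l))/l^{n+1}$ over $\log$-intervals of $O(1)$ length.'' That accounts only for the endpoint discrepancies (the terms the paper calls $A_1$ and $A_3$). The bulk term
\[
A_2=\int_{\ell(P_Q)}^{\min\{\ell(P_R),\ell(Q_R)\}}\bigl|\mu(Q(z_P,l))-\mu(Q(z_Q,l))\bigr|\,\frac{dl}{l^{n+1}}
\]
is an integral over the full range, not a boundary term, and your additive containment only gives $|\mu(Q(z_P,l))-\mu(Q(z_Q,l))|\le \mu\bigl(Q(z_Q,l+\ell(Q))\setminus Q(z_Q,l-\ell(Q))\bigr)$, an annulus of width $\sim\ell(Q)$. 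Bounding $\int \mu(\text{annulus})\,l^{-n-1}\,dl$ is not an $O(1)$-$\log$-length boundary estimate; it requires either the kernel comparison $|(l\pm\ell(Q))^{-n-1}-l^{-n-1}|\lesssim \ell(Q)\,l^{-n-2}$ after a change of variable, or the paper's Fubini argument. The paper interchanges the $l$- and $\mu$-integrals, observes that for fixed $y$ the set $\{l:y\in Q(z_P,l)\,\Delta\,Q(z_Q,l)\}$ is an interval of length $O(|z_P-z_Q|_\infty)$, and obtains
\[
A_2\le C\int_{\R^d\setminus P_Q}\frac{|z_P-z_Q|_\infty}{|y-z_P|_\infty^{\,n+1}}\,d\mu(y)\le C\,\frac{|z_P-z_Q|_\infty}{\ell(P_Q)}\le C.
\]
This is the missing idea in your sketch; without it the argument as written does not close.

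\textbf{Part (5).} Your ``complementary regime'' $\ell(R^*)\gg\ell(Q^*)$ does not reduce to the previous case: there $\dl(Q^*,S_0)=O(1)$ came from $\ell(S_0)=O(\ell(Q^*))$, which fails now. What is needed is $\dl(Q^*,R^*)\le C$, and this requires invoking (2) for $R$ (not just for $Q$): chain $Q\subset R\subset R^*$ via (4) to get $\dl(Q,R^*)\le\dl(Q,R)+\dl(R,R^*)+C\le C$ by (1) and (2), then chain $Q\subset Q^*\subset R^*$ to extract $\dl(Q^*,R^*)$. The paper handles this more directly by proving $\dl(R,2^jR)\le C$ (where $Q^*=2^jQ$) via the containment $Q(z_R,l)\subset Q(z_Q,2l)$ and the already-established bound on $\dl(Q,2c_1Q^*)$, then taking $S=(2^{j+1}R)^*$. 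Your final sentence that (5) is a routine composition of (1), (2), (4) is correct, but the composition you wrote down omits the essential use of (2) applied to $R$.
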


\begin{proof} 
In \cite{SaTa9}, we have proved (1)--(4). 
For reader's convenience the full proof is given here. 
(1) is obvious. To prove (2) we set $Q^*=2^{k_0}Q_0$.
We may assume that $k_0 \ge 1$.
The dyadic argument yields that 
$\ds
\dl(Q,2^{k_0}Q)
=
\int_{\ell(Q)}^{\ell(2^{k_0}Q)}
\frac{\mu(Q(z_Q,l))}{l^n}\,\frac{dl}{l}
\le
2^n\,\log2\,\sum_{k=1}^{k_0}
\frac{\mu(2^kQ)}{\ell(2^kQ)^n}.
$
Note that
$\ds
2^{d+1}\,\mu(2^{k-1}Q) \le \mu(2^kQ)
$ 
for $k=1,2,\ldots,k_0$, since $2^{k-1}Q$ is not doubling,
which yields, together with the fact that $d \ge n$,
\[
\dl(Q,2^{k_0}Q) 
\le 2^n\,\log2\,
\frac{\mu(2^{k_0}Q)}{\ell(2^{k_0}Q)^n}
\sum_{k=1}^{k_0}
(2^{n-d-1})^{k_0-k}
\le C_0\,2^{n+1}\,\log2.
\]

We prove (3). 
It follows by the dyadic argument and the assumption that 
\[
\dl(Q,2^{k_0}Q) \le 
2^n\,\log2\,\sum_{k=1}^{k_0}
\frac{\mu(2^kQ)}{\ell(2^kQ)^n}
\le 2^n\,\log2\cdot
\frac{\theta\al}{\ell(Q)^n}
\sum_{k=0}^{k_0}2^{-nk}
\le 2^n\,\log2\cdot\theta\,C_0\,c_n.
\]

Now we prove (4). It suffices to prove that 
\begin{equation} \label{b1}
A:=
\l|\dl(P_Q,R)-\dl(Q,R)\r| \le C.
\end{equation}
We decompose $A$ as 
\begin{eqnarray*}
\lft{
A \ = \ \l|
\int_{\ell(P_Q)}^{\ell(P_R)}
\frac{\mu(Q(z_P,l))}{l^n}\,\frac{dl}{l}
-\int_{\ell(Q)}^{\ell(Q_R)}
\frac{\mu(Q(z_Q,l))}{l^n}\,\frac{dl}{l}
\r|} \\ &\le& 
\int_{\ell(Q)}^{\ell(P_Q)}
\frac{\mu(Q(z_Q,l))}{l^n}\,\frac{dl}{l}
+ \l|
\int
_{\ell(P_Q)}
^{\min\{\ell(P_R),\ell(Q_R)\}}
\l(\mu(Q(z_P,l))-\mu(Q(z_Q,l))\r)
\,\frac{dl}{l^{n+1}}
\r| \\ &\quad& + \int
_{\min\{\ell(P_R),\ell(Q_R)\}}
^{\max\{\ell(P_R),\ell(Q_R)\}}
\l(\frac{\mu(Q(z_P,l))}{l^n}+\frac{\mu(Q(z_Q,l))}{l^n}\r)
\,\frac{dl}{l}
\ =: \ 
A_1+A_2+A_3.
\end{eqnarray*}
By (1) 
the integrals $A_1$ and $A_3$ are easily estimated above 
by some constant $C$. 
So we estimate $A_2$. 
Bound $A_2$ from above by 
\begin{eqnarray*}
A_2 \le \int_{\ell(P_Q)}^{\infty}
\mu(Q(z_P,l) \Delta Q(z_Q,l))
\,\frac{dl}{l^{n+1}}
= \int_{\ell(P_Q)}^{\infty}
\int_{\R^d}
\chi_{Q(z_P,l) \Delta Q(z_Q,l)}(y)
\,d\mu(y)\,\frac{dl}{l^{n+1}}.
\end{eqnarray*}
A simple geometric observation tells us that 
$\chi_{Q(z_P,l) \Delta Q(z_Q,l)}(y)=0$
if 
\[ 
l \notin 
\l[\min\{|y-z_P|_{\infty},|y-z_Q|_{\infty}\},
\,\max\{|y-z_P|_{\infty},|y-z_Q|_{\infty}\}\r],
\] 
where 
$|y|_{\infty}:=\max\{|y_1|,\ldots,|y_d|\}$. 
This observation and Fubini's theorem yield 
\begin{eqnarray*}
A_2 &\le& C\,
\int_{\R^d \setminus P_Q} \l|
\frac{1}{|y-z_P|_{\infty}{}^n}-\frac{1}{|y-z_Q|_{\infty}{}^n}
\r|\,d\mu(y)
\\ &\le& C\,
\int_{|y-z_P|_{\infty} \ge \ell(P_Q)/2}
\frac{|z_P-z_Q|_{\infty}}{|y-z_P|_{\infty}{}^{n+1}}
\,d\mu(y) \le C\,
\frac{|z_P-z_Q|_{\infty}}{\ell(P_Q)} \le C.
\end{eqnarray*}
This proves \eqref{b1}. 

Finally we establish (4).
Let $Q^*=2^jQ$. Then 
we claim $\dl(R,2^jR) \le C$. 
Indeed, 
by virtue of the fact that $Q \subset R$ we see that 
if $l \ge \ell(R)$ then 
$Q(z_R,l) \subset Q(z_q,2l)$. 
As a consequence we obtain
\begin{eqnarray*}
\lft{
\dl(R,2^jR)
=\int_{\ell(R)}^{2^j\ell(R)}
\frac{\mu(Q(z_R,l))}{l^n}\,\frac{dl}{l}
}\\
&\le&
\int_{\ell(R)}^{2^j\ell(R)}
\frac{\mu(Q(z_Q,2l))}{l^n}\,\frac{dl}{l}
\le 
\int_{\ell(Q)}^{c_1\,2^{j+1}\ell(Q)}
\frac{\mu(Q(z_Q,l))}{l^n}\,\frac{dl}{l}
\le C.
\end{eqnarray*}
If we put $S:=(2^{j+1}R)^*$, then 
$\dl(R^*,S) \le C$. 
(1) and (4) finally give us
\[
\dl(Q^*,S) \le 
\dl(Q^*,2^{j+1}R)+\dl(2^{j+1}R,S) + C 
\le C. 
\]
This is the desired result.
\end{proof}

\paragraph{Scalar-valued Campanato space}\ \ 
Having cleared up the definition of RBMO,
we will find a relationship between RBMO and the Morrey spaces. 
With the definition of RBMO in mind, 
we shall define the Campanato spaces.

Let $f \in L^1_{loc}(\mu)$.
We define the Campanato spaces $\cCpq(k,\mu)$ 
normed by 
\begin{eqnarray*}
\|f \, : \, \cCpq(k,\mu)\|
&:=&
\sup_{Q \in \cQ(\mu)}
\mu(kQ)^{\frac{1}{p}-\frac{1}{q}}
\l(\int_Q|f(x)-m_{Q^*}(f)|^q
\,d\mu(x)\r)^{\frac{1}{q}}\\
&\quad +&
\sup_{\scr Q \subset R \atop \scr Q,R \in \cQ(\mu,2)}
\mu(Q)^{\frac{1}{p}}
\frac{|m_Q(f)-m_R(f)|}{K_{Q,R}},
\,1 \le q \le p \le \infty,
\,k>1.
\end{eqnarray*}

Let $k_1,k_2>1$. 
Then $\cCpq(k_1,\mu)$ and $\cCpq(k_2,\mu)$ 
coincide as a set and their norms are mutually equivalent.
Speaking more precisely, we have the norm equivalence
\begin{equation} \label{b2}
\| f \, : \, \cCpq(k_1,\mu)\|
\sim
\| f \, : \, \cCpq(k_2,\mu)\|.
\end{equation}
To prove \eqref{b2} we may assume that $k_2=2k_1-1$
because of the monotonicity of $\cCpq(k,\mu)$
with respect to $k$.
Then all we have to prove is 
\[
\mu(k_1Q)^{\frac{1}{p}-\frac{1}{q}}
\l(\int_Q
|f(x)-m_{Q^*}(f)|^q
\,d\mu(x)\r)^{\frac{1}{q}}
\le C\,
\| f \, : \, \cCpq(k_2,\mu)\|
\]
for fixed cube $Q \in \cQmu$.
Divide equally $Q$ into $2^d$ cubes 
and collect those in $\cQmu$.
Let us name them $Q_1,Q_2,\ldots,Q_N,\,N \le 2^d$.
The triangle inequality reduces the matter to showing
\[
\mu(k_1Q)^{\frac{1}{p}-\frac{1}{q}}
\l(\int_{Q_l}|f(x)-m_{Q^*}(f)|^q\,d\mu(x)\r)^{\frac{1}{q}}
\le C\,
\| f \, : \, \cCpq(k_2,\mu)\|,
\, 1 \le l \le N.
\]
Note that $k_2Q_l \subset k_1Q$.
We apply Lemma \ref{lm2} (5) 
to obtain an auxiliary doubling cube $R$ 
which contains $(Q_l)^*,Q^*$ 
and satisfies 
$K_{(Q_l)^*,R},K_{Q^*,R} \le C$.
Thus, we obtain
\begin{eqnarray*}
\lft{
\mu(k_1Q)^{\frac{1}{p}-\frac{1}{q}}
\l(\int_{Q_l}|f(x)-m_{Q^*}(f)|^q\,d\mu(x)\r)^{\frac{1}{q}}
} \\ &\le&
\mu(k_1Q)^{\frac{1}{p}-\frac{1}{q}}
\l(\int_{Q_l}|f(x)-m_{(Q_l)^*}(f)|^q\,d\mu(x)\r)^{\frac{1}{q}}
\\ &\quad + &
\mu(Q_l)^{\frac{1}{p}}|m_{(Q_l)^*}(f)-m_R(f)|
\ + \
\mu(Q_l)^{\frac{1}{p}}|m_R(f)-m_{Q^*}(f)|
\\ &\le& C\,
\| f \, : \, \cCpq(k_2,\mu)\|.
\end{eqnarray*}
As a result \eqref{b2} is proved.

Since $\cCpq(k_1,\mu)$ and $\cCpq(k_2,\mu)$ are
isomorphic to each other as Banach spaces,
no confusion can occur if we denote 
$\cCpq(\mu)=\cCpq(2,\mu)$.

Note that $\cC^{\infty}_q(\mu)=RBMO$,
if $1\le q<\infty$.
This is an immediate consequence of Lemma \ref{lm1}.
Thus we can say 
that RBMO is a limit function space of $\cCpq(\mu)$ 
as $p \to \infty$ with $q \in [1,\infty)$ fixed.

Next, we observe $\cQ(\mu,2)$ can be seen as a net
whose order is induced by natural inclusion.
With the aid of the following proposition, 
we shall cope with the ambiguity of constant functions 
in the semi-norm of the Campanato spaces. 

\begin{proposition} \label{prp1}
Let $1\le q \le p<\infty$.
Then the limit 
$\ds M(f):=\lim_{Q \in \cQ(\mu,2)}m_Q(f)$ 
exists for every $f \in \cCpq(\mu)$. 
That is, given $\ve>0$, 
we can find a doubling cube $Q \in \cQ(\mu,2)$
such that
\[
| m_R(f)-m_Q(f) | \le \ve
\]
for all $R \in \cQ(\mu,2)$ engulfing $Q$.
In particular
there exists an increasing sequence of concentric doubling cubes 
$I_0 \subset I_1 \subset \ldots \subset I_k \subset \ldots$
such that 
\begin{equation} \label{b3}
\{m_{I_k}(f)\}_{k \in \N_0}
\mbox{ is Cauchy and }
\bigcup_kI_k=\R^d.
\end{equation} 
\end{proposition}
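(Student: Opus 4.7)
The plan is to construct an explicit Cauchy sequence of doubling cubes concentric at a fixed point and then promote the Cauchy property to the engulfing directed-limit version. In the trivial case $\mu(\R^d)<\infty$, the whole space belongs to $\cQ(\mu,2)$, and evaluating the first term of the Campanato norm at $Q=\R^d$ forces $f\in L^q(\mu)\subset L^1(\mu)$, so $M(f)=m_{\R^d}(f)$ does the job. Henceforth I assume $\mu(\R^d)=\infty$.

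Fix $x_0\in\supp(\mu)$ and a starting doubling cube $I_0$ centered at $x_0$ (existence guaranteed by the footnote in Section 3). Inductively set $I_{j+1}:=(2^{m_j}I_j)^*$, where $m_j\ge 1$ is the smallest integer for which $\mu(2^{m_j}I_j)\ge 2\mu(I_j)$; such $m_j$ exists since $\mu(2^mI_j)\to\infty$. Each $I_j\in\cQ(\mu,2)$ is concentric with $x_0$ and $\mu(I_j)\ge 2^j\mu(I_0)$. The crucial estimate is $\delta(I_j,I_{j+1})\le C$ uniformly in $j$. For $m_j=1$ this follows at once from Lemma \ref{lm2}(1), (2), (4). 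For $m_j\ge 2$, the minimality of $m_j$ yields $\mu(I_j)\le\mu(2^{m_j-1}I_j)<2\mu(I_j)$, so Lemma \ref{lm2}(3) (with $\theta=2$) bounds $\delta(I_j,2^{m_j-1}I_j)$, Lemma \ref{lm2}(1) handles $\delta(2^{m_j-1}I_j,2^{m_j}I_j)\le C_0\log 2$, Lemma \ref{lm2}(2) handles the $*$-adjustment $\delta(2^{m_j}I_j,I_{j+1})$, and Lemma \ref{lm2}(4) glues these pieces together.

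Invoking the defining inequality of $\cCpq(\mu)$ gives $|m_{I_j}(f)-m_{I_{j+1}}(f)|\le\|f\,:\,\cCpq(\mu)\|\,\mu(I_j)^{-1/p}K_{I_j,I_{j+1}}\le C\|f\|\,2^{-j/p}\mu(I_0)^{-1/p}$, a geometrically summable tail, so $\{m_{I_j}(f)\}$ is Cauchy. The growth condition together with $\mu(I_j)\to\infty$ forces $\ell(I_j)\to\infty$, and concentricity then yields $\bigcup_jI_j=\R^d$, establishing the ``in particular'' assertion.

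For the engulfing part, given $\varepsilon>0$ I select $K$ so large that the Cauchy tail from $K$ is below $\varepsilon/3$ and set $Q=I_K$. For an arbitrary doubling $R\supset Q$, running the same construction with $R$ in place of $I_0$ produces a second Cauchy sequence $R=R_0\subset R_1\subset\ldots$ with some limit $L_R$, and the same telescoping yields $|m_R(f)-L_R|\le C'\mu(R)^{-1/p}\le C'\mu(Q)^{-1/p}$. The main obstacle, and the real content of the argument, is to identify $L_R$ with $\lim_jm_{I_j}(f)$; this is accomplished by choosing indices $j$ and $j'$ large enough that $R_{j'}$ and $I_j$ have comparable sidelength, then appealing to Lemma \ref{lm2}(5) to embed both in a common doubling cube through which the two Cauchy tail estimates transfer. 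A three-term triangle inequality then delivers $|m_R(f)-m_Q(f)|=O(\varepsilon)$, completing the proof.
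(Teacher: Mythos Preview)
Your construction of the concentric Cauchy sequence $\{I_j\}$ matches the paper's almost verbatim, including the use of Lemma~\ref{lm2}\,(1)--(4) to bound $\delta(I_j,I_{j+1})$.  The difference lies in how the net limit (the ``engulfing'' statement) is established, and there your sketch has a genuine gap.

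For the identification $L_R=\lim_j m_{I_j}(f)$ you propose to pick indices $j,j'$ so that $R_{j'}$ and $I_j$ have \emph{comparable sidelength} and then invoke Lemma~\ref{lm2}\,(5).  That lemma, however, requires not just $\ell(R)\le c_1\ell(Q)$ but also the inclusion $Q\subset R$.  Since $\{I_j\}$ and $\{R_{j'}\}$ are centered at different points and each sequence may have arbitrarily large jumps in sidelength (the $*$--operation and the choice of $m_j$ impose no upper bound on $\ell(I_{j+1})/\ell(I_j)$), there is no reason why a pair with \emph{both} containment and a uniform sidelength ratio should exist.  The paper avoids this by staying concentric with the first sequence: for a doubling $Q\supset Q_j$ with $Q\not\supset Q_{j+1}$ it sets $Q':=(Q_j{}^{Q})^{*}$, a doubling cube concentric with the $Q_j$'s, and observes that $\delta(Q,Q')$ and $\delta(Q_j,Q')$ are uniformly bounded (via Lemma~\ref{lm2}\,(1),(2),(4),(5)); the triangle inequality through $Q'$ then gives $|m_Q(f)-m_{Q_j}(f)|\le C\,2^{-j/p}\|f\|$ directly, without a second Cauchy sequence.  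Your argument can be repaired along these lines, but as written the ``comparable sidelength'' step is not justified.

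A smaller point: in the finite--measure case your shortcut of evaluating the Campanato norm at $Q=\R^d$ presupposes that $m_{\R^d}(f)$ is defined, i.e.\ that $f\in L^1(\mu)$---which is exactly what has to be shown.  The paper instead proves $f\in L^1(\mu)$ using only bounded cubes (after reducing to $f\ge 0$ via $\|\,|f|:\cC^p_1(\mu)\|\le C\|f:\cC^p_1(\mu)\|$), which is the more robust route.
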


We remark that the condition like \eqref{b3} appears in \cite{Fu}. 
We are mainly interested in the function $f \in \cCpq(\mu)$ 
such that $M(f)=0$. 

\begin{proof}
Before we come to the proof of Proposition \ref{prp1},
we note that
\begin{eqnarray} \label{b4}
\|\,|f| \, : \, \cC^p_1(\mu)\| 
\le C\,\| f \, : \, \cC^p_1(\mu)\|.
\end{eqnarray}
Indeed, we have
\begin{eqnarray*}
\lft{
\mu\l(\frac{3}{2}Q\r)^{\frac{1}{p}-1}
\int_Q||f(x)|-m_{Q^*}(|f|)|\,d\mu(x)
} \\ &=&
\mu\l(\frac{3}{2}Q\r)^{\frac{1}{p}-1}\frac{1}{\mu(Q^*)}
\int_Q\l|\int_{Q^*}|f(x)|-|f(y)|\,d\mu(y)\r|\,d\mu(x)
\\ &\le& 
\mu\l(\frac{3}{2}Q\r)^{\frac{1}{p}-1}\frac{1}{\mu(Q^*)}
\int_Q int_{Q^*}\l||f(x)|-|f(y)|\r|\,d\mu(y)\,d\mu(x)
\\ &\le& 
\mu\l(\frac{3}{2}Q\r)^{\frac{1}{p}-1}\frac{1}{\mu(Q^*)}
\int_Q \int_{Q^*}|f(x)-f(y)|\,d\mu(y)\,d\mu(x)
\\ &\le& 
\mu\l(\frac{3}{2}Q\r)^{\frac{1}{p}-1}
\int_Q|f(x)-m_{Q^*}(f)|\,d\mu(x)
+\mu(Q^*)^{\frac{1}{p}-1}
\int_{Q^*}|m_{Q^*}(f)-f(y)|\,d\mu(y)
\\ 
&\le& 
C\,\| f \, : \, \cC^p_1(\mu)\|.
\end{eqnarray*}
In the same way we can prove
\[
\sup_{\scr Q \subset R \atop \scr Q,R \in \cQ(\mu,2)}
\mu(Q)^{\frac{1}{p}}\frac{|m_Q(|f|)-m_R(|f|)|}{K_{Q,R}}
\le C\,\| f \, : \, \cC^p_1(\mu)\|.
\]
As a consequence \eqref{b4} is justified.

We now turn to the proof of Proposition \ref{prp1}.
By the monotonicity of $\cCpq(\mu)$
with respect to $q$, we may assume $q=1$.

{\bf Case 1 $\mu$ is infinite.}
Take a sequence of concentric doubling cubes 
$\{Q_j\}_{j \in \N}$ such that for all $j \in \N$ 
\[
\mu(Q_1) \ge 1,\,\mu(Q_{j+1}) \ge 2\mu(Q_j),
\quad
\dl(Q_j,Q_{j+1}) \le C
\]
for some $C>0$ depending only on $C_0$. 
Then by the definition of $\cC^p_1(\mu)$ it holds that
\[
|m_{Q_j}(f)-m_{Q_{j+1}}(f)|
\le C\,2^{-\frac{j}{p}}
\| f \, : \, \cC^p_1(\mu)\|,
\,j \in \N.
\]
Thus we establish at least
the existence of
$\ds M(f):=\lim_{j \to \infty}m_{Q_j}(f)$.
Let $Q \in \cQmu(\mu,2)$ 
which contains $Q_j$ and 
does not contain $Q_{j+1}$. 
Set $Q'=(Q_j{}^Q)^*$. 
Then by using Lemma \ref{lm2} 
it is easy to see that 
$\dl(Q,Q') \le C$ for some absolute constant $C>0$.
Then we have
\[
|m_{Q'}(f)-m_Q(f)|,|m_{Q'}(f)-m_{Q_j}(f)|
\le C, \quad
2^{-\frac{j}{p}}\| f \, : \, \cC^p_1(\mu) \|,
\]
which implies
\[
|m_Q(f)-M(f)| 
\le C\,2^{-\frac{j}{p}}\| f \, : \, \cC^p_1(\mu) \|.
\]
Thus we finally establish 
$\ds M(f)=\lim_{Q \in \cQ(\mu,2)}m_Q(f)$.

{\bf Case 2 $\mu$ is finite.}
In this case, we have only to prove
\begin{claim} \label{cl1}
If $\mu$ is finite and $\| f \, : \, \cC^p_1(\mu) \|<\infty$, 
then $f \in L^1(\mu)$.
\end{claim}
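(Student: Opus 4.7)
The plan is to fix a bounded doubling cube, bound the oscillation of $f$ on that cube via the Campanato norm, and then exhaust $\R^d$ by larger doubling cubes whose means remain uniformly bounded. Finiteness of $\mu$ then forces $f\in L^1(\mu)$.

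First I would pick $l_0$ so large that $\mu(Q_0)\ge 2^{-d-1}\mu(\R^d)$ for $Q_0:=Q(0,l_0)$; such $l_0$ exists because $\mu$ is finite and $\mu(Q(0,l))\uparrow\mu(\R^d)$. Then $\mu(2Q_0)\le\mu(\R^d)\le 2^{d+1}\mu(Q_0)$, so $Q_0\in\cQ(\mu,2)$ with $Q_0^*=Q_0$, and since $Q_0$ is a bounded cube and $f\in L^1_{loc}(\mu)$, the mean $m_{Q_0}(f)$ is a finite number. Applying the first summand of $\|f:\cC^p_1(\mu)\|$ at $Q_0$ yields
\[
\int_{Q_0}|f(x)-m_{Q_0}(f)|\,d\mu(x)\le \mu(\R^d)^{1-\frac{1}{p}}\|f:\cC^p_1(\mu)\|.
\]

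Next I would take an increasing sequence $Q_l:=Q(0,2^l l_0)$ of concentric doubling cubes exhausting $\R^d$, discarding initial indices if necessary so that every $Q_l\in\cQ(\mu,2)$. The key estimate uses finiteness of $\mu$: since $\mu(Q(z_{Q_0},t))\le\mu(\R^d)$ for all $t$,
\[
\dl(Q_0,Q_l)\le\mu(\R^d)\int_{\ell(Q_0)}^{\infty}\frac{dt}{t^{n+1}}=\frac{\mu(\R^d)}{n\,\ell(Q_0)^n},
\]
so $K_{Q_0,Q_l}$ is bounded by a constant $K$ independent of $l$. The second summand of the Campanato norm then forces $|m_{Q_l}(f)-m_{Q_0}(f)|\le\mu(Q_0)^{-1/p}K\|f:\cC^p_1(\mu)\|$, whence $|m_{Q_l}(f)|\le M$ with $M$ independent of $l$.

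Applying the first summand of the Campanato norm to $Q_l$ instead of $Q_0$ gives $\int_{Q_l}|f-m_{Q_l}(f)|\,d\mu\le\mu(\R^d)^{1-1/p}\|f:\cC^p_1(\mu)\|$, and combining with the uniform bound on $|m_{Q_l}(f)|$ shows
\[
\int_{Q_l}|f|\,d\mu\le\mu(\R^d)^{1-\frac{1}{p}}\|f:\cC^p_1(\mu)\|+M\,\mu(\R^d)
\]
uniformly in $l$; monotone convergence on $Q_l\uparrow\R^d$ yields $f\in L^1(\mu)$. The main obstacle is the uniform control of $|m_{Q_l}(f)|$, which is precisely where the growth condition combined with the finiteness of $\mu$ is essential: it keeps $\dl(Q_0,Q_l)$ bounded even as the cubes $Q_l$ grow to $\R^d$.
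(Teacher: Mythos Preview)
Your argument is correct and follows the same core idea as the paper: exhaust $\R^d$ by an increasing sequence of concentric doubling cubes $\{Q_l\}$ with $\dl(Q_0,Q_l)$ bounded uniformly in $l$, use the second summand of the Campanato norm to control $|m_{Q_l}(f)|$, and pass to the limit. Two execution differences are worth noting. First, the paper reduces to $f\ge 0$ via the preliminary inequality $\|\,|f|:\cC^p_1(\mu)\|\le C\|f:\cC^p_1(\mu)\|$ and then passes to the limit in the means; you instead apply the first summand of the Campanato norm on each $Q_l$ to bound $\int_{Q_l}|f-m_{Q_l}(f)|\,d\mu$ directly, which is slightly more self-contained. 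Second, your explicit estimate $\dl(Q_0,Q_l)\le \mu(\R^d)/(n\,\ell(Q_0)^n)$, together with the observation that any cube $Q_l\supset Q_0$ is automatically $(2,2^{d+1})$-doubling once $\mu(Q_0)\ge 2^{-d-1}\mu(\R^d)$, makes the construction of the exhausting sequence completely transparent, whereas the paper simply asserts its existence. One small correction to your closing remark: the uniform bound on $\dl(Q_0,Q_l)$ uses only the finiteness of $\mu$, not the growth condition; the growth condition enters the section only through the existence of $Q^*$ and related constructions, which your argument does not actually invoke.
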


In proving Claim \ref{cl1},
\eqref{b4} allows us to assume $f$ is positive.

We take an increasing sequence of concentric doubling cubes 
$\{Q_j\}_{j \in \N}$ 
such that
$\ds
\dl(Q_1,Q_k) \le C
$
for all $k \in \N$. 
Then we have
\[
m_{Q_k}(f) \le m_{Q_1}(f)
+\mu(Q_1)^{-\frac{1}{p}}(1+C)\| f \, : \, \cC^p_q(\mu)\|.
\]
Passage to the limit then gives
\[
\int_{\R^d}f\,d\mu \le \mu(\R^d)
\l(
m_{Q_1}(f)
+\mu(Q_1)^{-\frac{1}{p}}(1+C)\| f \, : \, \cC^p_q(\mu)\|
\r).
\]
This establishes $f \in L^1(\mu)$.
\end{proof}

The main theorem in this section is the following.

\begin{theorem} \label{thm2}
Let $1 \le q \le p<\infty$. 
Assume $f \in \cCpq(\mu)$ satisfies
$\ds
M(f)=\lim_{Q \in \cQ(\mu,2)}m_Q(f)=0.
$
Then 
\[
C^{-1}\,\| f \, : \, \cCpq(\mu)\|
\le \| f \, : \, \cMpq(\mu) \| \le
C\,\| f \, : \, \cCpq(\mu)\|
\]
for some constant $C>0$.
\end{theorem}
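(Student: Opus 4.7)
The plan is to prove the two inequalities via a common mechanism: the triangle decomposition
\[
|f| \le |f - m_{Q^*}(f)| + |m_{Q^*}(f)|
\]
combined in both directions with the purely set-theoretic identity
\[
\mu(2Q)^{\frac{1}{p}-\frac{1}{q}}\mu(Q)^{\frac{1}{q}}\mu(Q^*)^{-\frac{1}{p}} = \left(\frac{\mu(2Q)}{\mu(Q^*)}\right)^{\frac{1}{p}}\left(\frac{\mu(Q)}{\mu(2Q)}\right)^{\frac{1}{q}} \le C(d,p),
\]
which is bounded by $1$ when $Q$ is non-doubling (then $Q^* = 2^jQ$ with $j \ge 1$, so $2Q \subset Q^*$) and by $2^{(d+1)/p}$ when $Q^* = Q$ is itself doubling. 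This algebraic step aligns the two exponents and reduces both inequalities to bounding $|m_{Q^*}(f)|$ by a constant times $\mu(Q^*)^{-1/p}$ times the appropriate norm.

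For the upper Morrey bound $\|f:\cMpq(\mu)\| \le C\|f:\cCpq(\mu)\|$, I split, for each $Q \in \cQmu$,
\[
\mu(2Q)^{\frac{1}{p}-\frac{1}{q}}\Bigl(\int_Q|f|^q\,d\mu\Bigr)^{\frac{1}{q}} \le \mu(2Q)^{\frac{1}{p}-\frac{1}{q}}\Bigl(\int_Q|f-m_{Q^*}(f)|^q\,d\mu\Bigr)^{\frac{1}{q}} + \mu(2Q)^{\frac{1}{p}-\frac{1}{q}}\mu(Q)^{\frac{1}{q}}|m_{Q^*}(f)|.
\]
The first summand is bounded by $\|f:\cCpq(\mu)\|$ by definition. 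The second summand is where $M(f)=0$ enters: exploiting the chain of concentric doubling cubes $Q^* = R_0 \subset R_1 \subset \ldots$ with $\mu(R_{j+1}) \ge 2\mu(R_j)$ and $K_{R_j,R_{j+1}} \le C$ already built in the proof of Proposition \ref{prp1}, the second part of the Campanato norm yields
\[
|m_{R_j}(f) - m_{R_{j+1}}(f)| \le C\mu(R_j)^{-\frac{1}{p}}\|f:\cCpq(\mu)\| \le C\,2^{-\frac{j}{p}}\mu(Q^*)^{-\frac{1}{p}}\|f:\cCpq(\mu)\|,
\]
and summing, together with $m_{R_j}(f) \to M(f) = 0$, gives $|m_{Q^*}(f)| \le C\mu(Q^*)^{-1/p}\|f:\cCpq(\mu)\|$, which closes the bound via the algebraic identity above. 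The finite-measure subcase runs identically, invoking Claim \ref{cl1} to justify $f \in L^1(\mu)$.

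For the upper Campanato bound $\|f:\cCpq(\mu)\| \le C\|f:\cMpq(\mu)\|$, I bound the two summands of the Campanato norm separately. For the oscillation term the same split is used, but now $|m_{Q^*}(f)|$ is controlled directly by H\"older's inequality and the doubling of $Q^*$:
\[
|m_{Q^*}(f)| \le \mu(Q^*)^{-\frac{1}{q}}\Bigl(\int_{Q^*}|f|^q\,d\mu\Bigr)^{\frac{1}{q}} \le \mu(Q^*)^{-\frac{1}{q}}\mu(2Q^*)^{\frac{1}{q}-\frac{1}{p}}\|f:\cMpq(\mu)\| \le C\mu(Q^*)^{-\frac{1}{p}}\|f:\cMpq(\mu)\|,
\]
so $M(f)=0$ is not needed in this direction. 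For the doubling-comparison term with $Q \subset R$ both doubling, the same H\"older-plus-doubling bound applied separately to $Q$ and $R$ gives $|m_Q(f)| \le C\mu(Q)^{-1/p}\|f:\cMpq(\mu)\|$ and $|m_R(f)| \le C\mu(R)^{-1/p}\|f:\cMpq(\mu)\|$; since $\mu(Q) \le \mu(R)$ and $K_{Q,R} \ge 1$, the triangle inequality yields $\mu(Q)^{1/p}|m_Q(f) - m_R(f)|/K_{Q,R} \le C\|f:\cMpq(\mu)\|$. The main obstacle is thus entirely concentrated in the chain argument of the first direction, which is the unique place in the proof where the normalization $M(f)=0$ is genuinely used.
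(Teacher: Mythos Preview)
Your proof is correct and follows essentially the same route as the paper: the triangle decomposition $|f|\le|f-m_{Q^*}(f)|+|m_{Q^*}(f)|$, followed by a telescoping chain of concentric doubling cubes with geometrically growing measure and bounded $K$-constants to convert $M(f)=0$ into the pointwise bound $|m_{Q^*}(f)|\le C\mu(Q^*)^{-1/p}\|f:\cCpq(\mu)\|$. The only cosmetic difference is that the paper packages the chain construction into a separate lemma (Lemma~\ref{lm3}) and truncates the sum at a finite stage by threading through one of the cubes $I_{K_0}$ from Proposition~\ref{prp1}, whereas you telescope the full infinite series and appeal directly to the net limit $m_{R_j}(f)\to M(f)=0$; your version is slightly cleaner, and you also spell out the ``obvious'' left inequality that the paper leaves to the reader.
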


The left inequality is obvious. 
To prove the right inequality we need a lemma. 

\begin{lemma} \label{lm3}
Under the assumption of Theorem \ref{thm2},
given $R \in \cQ(\mu,2)$,
there exists a sequence of increasing doubling cubes 
$\{R_k\}_{k=1}^K$ such that
\begin{enumerate} \item
$R_k$ is concentric and $R_1=R$.
\item
If $\mu$ is finite, then so is $K$ 
and $R_K=\R^d$.
\item
For large $K_0 \in \N$, 
there exists $R_{k_0}$ so that 
$R_{k_0} \subset I_{K_0} \subset R_{k_0+1}$.
\item
$\mu(R_k) \ge 2^{k-1}\mu(R),\,k<K$.
\item
$\dl(R_k,R_{k+1}) \le C,\, k<K$.
\end{enumerate}
\end{lemma}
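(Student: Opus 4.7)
I prove the lemma by induction, constructing $R_k$ one step at a time. Set $R_1 = R$. Given a doubling cube $R_k$ concentric with $R$, choose the smallest integer $j_k \ge 1$ with $\mu(2^{j_k}R_k) \ge 2\mu(R_k)$ and set $R_{k+1} := (2^{j_k}R_k)^*$, the smallest doubling cube of the form $2^{j_k+l}R_k$; its existence is guaranteed by the growth condition (cf.\ the footnote). When $\mu$ is finite and no such $j_k$ exists, i.e.\ $\mu(\R^d) < 2\mu(R_{K-1})$, the process terminates with $R_K := \R^d$.

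Properties (1), (2), and (4) are immediate from the construction. The heart of the matter is property (5). By Lemma~\ref{lm2}(4),
\[
\dl(R_k, R_{k+1}) \le \dl(R_k, 2^{j_k-1}R_k) + \dl(2^{j_k-1}R_k, 2^{j_k}R_k) + \dl(2^{j_k}R_k, R_{k+1}) + C.
\]
The minimality of $j_k$ gives $\mu(R_k) \le \mu(2^{j_k-1}R_k) < 2\mu(R_k)$, so Lemma~\ref{lm2}(3) with $\alpha = \mu(R_k)$ and $\theta = 2$ bounds the first term. Lemma~\ref{lm2}(1) bounds the second by $C_0 \log 2$, and Lemma~\ref{lm2}(2) bounds the third. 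In the terminating case, $\mu(\R^d) < 2\mu(R_{K-1})$ together with \eqref{growth} gives $\dl(R_{K-1}, \R^d) \le C$ directly.

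For property (3), I use that $\ell(R_k) \to \infty$, which follows from $\mu(R_k) \ge 2^{k-1}\mu(R) \to \infty$ and the growth condition \eqref{growth}. Since $\bigcup_{K_0} I_{K_0} = \R^d$, for sufficiently large $K_0$ the displacement $|z_R - z_{I_{K_0}}|_\infty$ is small compared to $\ell(I_{K_0})$, so one can locate an index $k_0$ where the $R_k$ sizes straddle $\ell(I_{K_0})$. If the natural jumps in $\ell(R_k)$ happen to overshoot, I refine the construction by inserting an intermediate dilate $(2^m R)^*$ with $m$ chosen so that the inserted cube engulfs $I_{K_0}$; such an insertion preserves (5) by the same application of Lemma~\ref{lm2}(3), since the measure ratio across the insertion remains bounded.

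The main obstacle is property (5): the minimal choice of $j_k$ is essential, because any larger $j_k$ would make the ratio $\mu(2^{j_k-1}R_k)/\mu(R_k)$ unbounded and defeat the $\theta = 2$ bound in Lemma~\ref{lm2}(3). Property (3) is comparatively mild geometric bookkeeping once the insertion mechanism is in place, and property (2) is automatic from the termination criterion.
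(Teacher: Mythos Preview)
Your construction and the paper's are essentially the same inductive scheme: pass from $R_k$ to the next dyadic dilate whose measure exceeds a doubling threshold, then apply the $^*$ operation, and invoke Lemma~\ref{lm2}(3) (together with (1), (2), (4)) for property~(5). The paper uses the absolute threshold $\mu>2^k\mu(R)$ and insists on $l\ge 3$, whereas you use the relative threshold $\mu\ge 2\mu(R_k)$ with $j_k\ge 1$; both yield (4) and (5) by the same mechanism. Your write-up is in fact more explicit than the paper's one-line appeal to Lemma~\ref{lm2}(3), and your separate treatment of the terminating case is correct. Your discussion of property~(3) via an ad~hoc insertion goes beyond what the paper says (it simply asserts the properties are ``easy to verify''); the insertion shifts the index in (4) by at most one, which is harmless for the application in Theorem~\ref{thm2}.
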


\begin{proof}
Suppose we have defined $R_k$. 
If $\mu(\R^d) \le 2^k\mu(R)$,
then we set $R_{k+1}=\R^d$ and we stop.
Suppose otherwise.
We define $R_{k+1}$ as the smallest doubling cube
of the form $2^lR_k$ with $l \ge 3$
whose $\mu$-measure exceeds $2^k\mu(R)$.
By virtue of Lemma \ref{lm2} (3) 
it is easy to verify 
that $\{R_k\}_{k=1}^K$ obtained in this way 
satisfies the property of the lemma. 
\end{proof}

Let us return to the proof of Theorem \ref{thm2}.
Let $R \in \cQ(\mu)$. 
We shall estimate 
\[
\mu(2R)^{\frac{1}{p}-\frac{1}{q}}
\l(\int_R|f(x)|^q\,d\mu(x)\r)^{\frac{1}{q}}.
\]
The triangle inequality enables us 
to majorize the above integral by 
\[
\mu\l(\frac{3}{2}R\r)^{\frac{1}{p}-\frac{1}{q}}
\l(\int_R |f(x)-m_{R^*}(f)|^q\,d\mu(x)\r)^{\frac{1}{q}}
+ \mu(R)^\frac{1}{p}|m_{R^*}(f)|.
\]
Consequently 
we can reduce the matters to the estimate of
$\ds
\mu(R^*)^{\frac{1}{p}}|m_{R^*}(f)|.
$

Now we invoke Lemma \ref{lm3}
for $K_0$ taken so that
$\ds
\mu(R)^{\frac{1}{p}}|m_{I_{K_0}}(f)|
\le \| f \, : \, \cCpq(\mu)\|.
$
Using the sequence $\{R_k\}_{k=1}^K$,
we obtain
\begin{eqnarray*}
\lft{
\mu(R^*)^\frac{1}{p}\,|m_{R_k}(f)-m_{R_{k+1}}(f)|
} \\ &\le& C\,
2^{-\frac{k}{p}}\,
\mu(R_k)^\frac{1}{p}\,
\frac{|m_{R_k}(f)-m_{R_{k+1}}(f)|}{1+\dl(R_k,R_{k+1})}
\,\le\, C\,
2^{-\frac{k}{p}}
\|f_j \, : \, \cCpq(l^r,\mu)\|.
\end{eqnarray*}
We also have 
$\ds
\mu(R^*)^{\frac{1}{p}}
|m_{R_{k_0}}(f)-m_{I_{K_0}}(f)|
\le C\,2^{-\frac{k_0}{p}}\|f \, : \, \cCpq(\mu)\|,
$
since by the properties {\it 3} and {\it 4} of Lemma \ref{lm3}
we see that
$\dl(R_{k_0},R_{k_0+1}),\dl(I_{K_0},R_{k_0+1})$
are majorized by some constants dependent only on $C_0$. 
The triangle inequality gives us
\begin{eqnarray*}
\lft{
\mu(R^*)^\frac{1}{p}\,|m_{R^*}(f)|
} \\ &\le& 
\mu(R)^\frac{1}{p}\,
\sum_{k=1}^{k_0-1}|m_{R_k}(f)-m_{R_{k+1}}(f)|
+
\mu(R^*)^\frac{1}{p}\,
\l(
|m_{R_{k_0}}(f)-m_{I_{K_0}}(f)|
+ 
|m_{I_{K_0}}(f)|
\r)
\\ &\le& C\,
\l(
\sum_{k=1}^{\infty}
2^{-\frac{k}{p}}
\r)\,
\| f \, : \, \cCpq(\mu)\|
+
\mu(R^*)^\frac{1}{p}\,|m_{I_{K_1}}(f)|
\,\le\, C\,\|f \, : \, \cCpq(\mu)\|.
\end{eqnarray*} 
The proof of Theorem \ref{thm2} is therefore complete.

\paragraph{Vector-valued extension}\ \ 
Finally we consider the vector-valued extensions of Theorem \ref{thm2}.
Let $\| a_j \, : \, l^r \|$ denote
the $l^r$-norm of $a=\{a_j\}_{j \in \N}$. 
If possible confusion can occur,
then we write 
$\|\{a_j\}_{j \in \N} \, : \, l^r\|$.
For $f \in L^1_{loc}(\mu)$, 
we define the sharp maximal operator due to Tolsa by 
\[
M^{\sharp}f(x)
:=
\sup_{x \in Q \in \cQ(\mu)}
\frac{1}{\mu\l(\frac{3}{2}Q\r)}
\int_Q|f(y)-m_{Q^*}(f)|\,d\mu(y)
+
\sup_{\scr x \in Q \subset R \atop \scr Q,R \in \cQ(\mu,2)}
\frac{|m_Q(f)-m_R(f)|}{K_{Q,R}}.
\]

Lemma \ref{lm1} can be extended 
to the following vector-valued version. 

\begin{lemma} \label{lm4}
{\rm \cite{SaTa9}}
Let $f_j \in$RBMO for $j=1,2,\ldots$. 
For any cube $Q \in \cQ(\mu)$ and 
$q,r \in (1,\infty)$, 
there exists a constant $C$ independent of $f_j$ such that 
\begin{equation} \label{b5}
\l(
\frac{1}{\mu\l(\frac{3}{2}Q\r)}
\int_Q
\|f_j(x)-m_{Q^*}(f_j) \, : \, l^r\|^q
\,d\mu(x)\r)^{\frac{1}{q}}
\le C\,\sup_{x \in \R^d}
\l\|M^{\sharp}f_j(x) \, : \, l^r\r\|.
\end{equation}
\end{lemma}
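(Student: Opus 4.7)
The plan is to mimic Tolsa's proof of the scalar result (Lemma \ref{lm1}, part 2) by carrying out a Calder\'{o}n--Zygmund stopping time on the \emph{scalar} quantity
\[
F(x) := \|f_j(x) - m_{Q^*}(f_j) \, : \, l^r\|,
\]
and deriving an exponential John--Nirenberg-type decay for $F$ on $Q$, from which \eqref{b5} follows by integrating the distribution function against $q t^{q-1}\,dt$. After this reduction it suffices to treat the case $q=1$, and by homogeneity we may normalize $A := \sup_{x}\|M^\sharp f_j(x) \, : \, l^r\|$ to $1$.

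Fix $Q_0 \in \cQ(\mu)$ and write $F_0(x)=\|f_j(x)-m_{Q_0^*}(f_j) \, : \, l^r\|$. The main step is a good-$\la$ inequality: for a threshold $\la>\la_0$ select, by a Besicovitch-type stopping time on $\frac{3}{2}Q_0$, a pairwise disjoint family of doubling cubes $\{P_k\}$ maximal with respect to $m_{P_k}(F_0)>\la$; outside $\bigcup_k P_k$ one has $F_0 \le \la$ $\mu$-a.e. On each $P_k$, Minkowski's inequality in $l^r$ yields
\[
F_0(x) \le \|f_j(x) - m_{P_k^*}(f_j) \, : \, l^r\|
+ \|m_{P_k^*}(f_j) - m_{Q_0^*}(f_j) \, : \, l^r\|,
\]
and the second summand is $O(\la)$ once $\la_0$ is large enough (see next paragraph). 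Combined with the doubling of $P_k$ and an iteration, this gives
\[
\mu\{x \in Q_0 \, : \, F_0(x) > t\}
\le C\,\mu\l(\tfrac{3}{2}Q_0\r)\,\exp(-c t),
\]
which is exactly the vector-valued John--Nirenberg estimate, and integrating in $t$ produces \eqref{b5}.

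The main obstacle is controlling the $l^r$-norm of the mean difference $m_{P_k^*}(f_j) - m_{Q_0^*}(f_j)$, which in the scalar case is a single numerical increment bounded by $K_{P_k^*,Q_0^*}\|f\|_*$. Here one must build, via Lemma \ref{lm2}(5), a finite chain of doubling cubes connecting $P_k^*$ to $Q_0^*$, bound each individual increment $|m_{R_i}(f_j)-m_{R_{i+1}}(f_j)|$ coordinate-wise by $K_{R_i,R_{i+1}}\,M^\sharp f_j(x)$ for any $x \in R_i$, take the $l^r$-norm via Minkowski, and then use Lemma \ref{lm2}(4) together with the doubling nature of the chain to sum the $K$-coefficients and obtain a total bound $C\,K_{P_k^*,Q_0^*}\cdot A$. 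This telescoping-in-$l^r$ argument is precisely where the vector-valued version diverges from Tolsa's scalar proof, but once it is verified, the rest of the Calder\'{o}n--Zygmund machinery and the integration of the distribution function go through unchanged.
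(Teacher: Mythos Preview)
The paper does not prove Lemma \ref{lm4}; it is quoted from \cite{SaTa9} without argument, so there is no proof in the present paper to compare your proposal against.

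Your overall strategy --- run Tolsa's Calder\'on--Zygmund stopping time on the scalar function $F_0(x)=\|f_j(x)-m_{Q_0^*}(f_j):l^r\|$, iterate to obtain exponential decay of the distribution function, and then integrate --- is the natural vector-valued extension of the argument behind Lemma \ref{lm1} and is correct in outline. There is, however, a gap in the step you flag as the ``main obstacle''. Your chaining from $P_k^*$ all the way to $Q_0^*$ yields only
\[
\|m_{P_k^*}(f_j)-m_{Q_0^*}(f_j):l^r\|\le C\,K_{P_k^*,Q_0^*}\cdot A,
\]
and nothing in the stopping condition on $F_0$ bounds $K_{P_k^*,Q_0^*}$ by $O(\la)$: a stopping cube can sit arbitrarily deep inside $Q_0$, so this estimate alone does not give the $O(\la)$ you claim. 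The fix is simpler than the full chain. By maximality of the stopping family there is a nearby larger doubling cube $\hat P_k$ (the first ancestor on which the stopping condition fails) with $m_{\hat P_k}(F_0)\le\la$, and Minkowski's integral inequality in $l^r$ gives directly
\[
\|m_{\hat P_k}(f_j)-m_{Q_0^*}(f_j):l^r\|
\le\frac{1}{\mu(\hat P_k)}\int_{\hat P_k}F_0\,d\mu\le\la.
\]
One then only needs to pass from $P_k^*$ to $\hat P_k$, which is a bounded number of steps by Lemma \ref{lm2}(2),(5) and contributes $O(A)=O(1)$ via exactly the coordinate-wise sharp-function bound you describe. With this correction the iteration and the integration in $t$ go through unchanged. (A minor expository point: the sentence ``it suffices to treat the case $q=1$'' is misplaced --- what you actually use is the $q=1$ average at each generation to feed the iteration; the exponential decay then yields all $q<\infty$ at once.)
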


We now define the vector-valued Campanato spaces. 
Let $1 \le q \le p \le \infty$ and $r \in (1,\infty)$. 
We say that $\{f_j\}_{j \in \N}$ belongs to 
the vector-valued Campanato spaces $\cCpq(l^r,\mu)$ if 
each $f_j$ is $\mu$-measurable and 
\begin{eqnarray*}
\|f_j \, : \, \cCpq(l^r,\mu)\|
&:=&
\sup_{Q \in \cQ(\mu)}
\mu(2Q)^{\frac{1}{p}-\frac{1}{q}}
\l(\int_Q
\|f_j(x)-m_{Q^*}(f_j) \, : \, l^r\|^q
\,d\mu(x)\r)^{\frac{1}{q}}
\\ &\quad +&
\sup_{\scr Q \subset R \atop \scr Q,R \in \cQ(\mu,2)}
\mu(Q)^{\frac{1}{p}}
\frac{\|m_Q(f_j)-m_R(f_j) \, : \, l^r\|}{K_{Q,R}}
<\infty.
\end{eqnarray*}

As for the vector-valued spaces, 
the norm equivalence of the Campanato type still holds. 

\begin{theorem} \label{thm3}
Let $1 \le q \le p<\infty$ and 
let $\{f_j\}_{j \in \N}$ be a sequence 
in $\cCpq(\mu)$. 
Assume that there exists an increasing sequence 
of concentric doubling cubes 
$I_0 \subset I_1 \subset \ldots \subset I_k \subset \ldots$
such that 
\[
\lim_{k \to \infty}m_{I_k}(f_j)=0
\mbox{ for all $j$ and }
\bigcup_kI_k=\R^d.
\]
Then there exists a constant $C>0$ 
independent of $\{f_j\}_{j \in \N}$ such that 
\[
C^{-1}\,\|f_j \, : \, \cCpq(l^r,\mu)\|
\le \|f_j \, : \, \cMpq(l^r,\mu)\| \le
C\,\|f_j \, : \, \cCpq(l^r,\mu)\|.
\]
\end{theorem}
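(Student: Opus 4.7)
The approach is to carry over the proof of Theorem \ref{thm2} verbatim into the vector-valued setting, replacing absolute values by $l^r$-norms and replacing Lemma \ref{lm1} by Lemma \ref{lm4}. The left inequality is again straightforward: for the first piece of the Campanato norm one uses $\|m_{Q^*}(f_j) \, : \, l^r\| \le m_{Q^*}(\|f_j \, : \, l^r\|)$ (Minkowski for integrals) together with the $l^r$-triangle inequality and the doubling property of $Q^*$; for the second piece one compares $m_Q(f_j)$ and $m_R(f_j)$ through the intermediate common doubling cube furnished by Lemma \ref{lm2}(5), exactly as in the scalar argument used to establish \eqref{b2}.

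For the right inequality, fix $R \in \cQ(\mu)$. The triangle-inequality split
$\|f_j(x) \, : \, l^r\| \le \|f_j(x) - m_{R^*}(f_j) \, : \, l^r\| + \|m_{R^*}(f_j) \, : \, l^r\|$
reduces matters to showing $\mu(R^*)^{1/p}\|m_{R^*}(f_j) \, : \, l^r\| \le C\|f_j \, : \, \cCpq(l^r,\mu)\|$, since the first summand is already dominated by the Campanato norm. Apply Lemma \ref{lm3} with $R$ replaced by $R^*$ to produce doubling cubes $R_1 = R^* \subset R_2 \subset \cdots$ with $\mu(R_k) \ge 2^{k-1}\mu(R^*)$ and $\dl(R_k,R_{k+1}) \le C$, and for any $K_0$ choose $k_0$ with $R_{k_0} \subset I_{K_0} \subset R_{k_0+1}$. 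Telescoping $m_{R^*}(f_j) - m_{I_{K_0}}(f_j)$ along the chain $R^* = R_1, R_2, \ldots, R_{k_0}, I_{K_0}$ and applying the second part of the vector-valued Campanato norm to each consecutive pair (using Lemma \ref{lm2}(4) to bound $\dl(R_{k_0}, I_{K_0})$ in the last link) produces a geometric series in $2^{-k/p}$ and yields, uniformly in $K_0$,
\[
\|m_{R^*}(f_j) - m_{I_{K_0}}(f_j) \, : \, l^r\| \le C\,\mu(R^*)^{-1/p}\|f_j \, : \, \cCpq(l^r,\mu)\|.
\]

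The one step where the vector-valued argument genuinely departs from the scalar proof of Theorem \ref{thm2} is passing from this uniform bound to the desired bound on $\|m_{R^*}(f_j) \, : \, l^r\|$, since the hypothesis supplies only the \emph{componentwise} convergence $m_{I_k}(f_j) \to 0$ for each fixed $j$ and not convergence of the whole $l^r$-norm. Rather than upgrade the hypothesis by hand, I would apply Fatou's lemma with respect to counting measure on $\N$: for every $j$ one has $|m_{R^*}(f_j)|^r = \lim_{K_0 \to \infty}|m_{R^*}(f_j) - m_{I_{K_0}}(f_j)|^r$, so summing over $j$ and using the previous display,
\[
\|m_{R^*}(f_j) \, : \, l^r\|^r \le \liminf_{K_0 \to \infty}\|m_{R^*}(f_j) - m_{I_{K_0}}(f_j) \, : \, l^r\|^r \le \bigl(C\,\mu(R^*)^{-1/p}\|f_j \, : \, \cCpq(l^r,\mu)\|\bigr)^r.
\]
Multiplying through by $\mu(R^*)^{1/p}$ and recombining with the first piece of the triangle split gives $\|f_j \, : \, \cMpq(l^r,\mu)\| \le C\|f_j \, : \, \cCpq(l^r,\mu)\|$, completing the proof of Theorem \ref{thm3}.
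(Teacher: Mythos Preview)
Your proposal is correct and follows the same route as the paper, which gives no separate proof of Theorem~\ref{thm3} and tacitly intends the vector-valued argument to be the direct $l^r$-analogue of the proof of Theorem~\ref{thm2}. Your one substantive addition---using Fatou's lemma on $\N$ to pass from the uniform bound on $\|m_{R^*}(f_j)-m_{I_{K_0}}(f_j):l^r\|$ to a bound on $\|m_{R^*}(f_j):l^r\|$ under the merely componentwise hypothesis $m_{I_k}(f_j)\to 0$---fills a genuine gap that the paper does not address, and is the cleanest way to do so.
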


Using Lemma \ref{lm4}, 
we can say more about $\cC^{\infty}_q(l^r,\mu)$, 
which gives us a partial clue to the definition 
of the vector-valued RBMO spaces.
Speaking precisely,
we obtain the following proposition.

\begin{proposition} \label{prp3}
Let $\{f_j\}_{j \in \N}$ be a sequence 
of $L^1_{loc}(\mu)$ functions.
Then
\begin{equation} \label{b6}
\sup_{\scr Q \subset R \atop \scr Q,R \in \cQ(\mu,2)}
\frac{\|m_Q(f_j)-m_R(f_j) \, : \, l^r\|}{K_{Q,R}} 
\le c\,
\sup_{x \in \R^d}\|M^{\sharp}f_j(x) \, : \, l^r\|.
\end{equation}
In particular, we have
\begin{equation} \label{b7}
\|f_j \, : \, \cC^{\infty}_q(l^r,\mu) \|
\le c\,
\sup_{x \in \R^d}\| M^{\sharp}f_j(x) \, : \, l^r\|.
\end{equation}
\end{proposition}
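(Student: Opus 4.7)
The plan is to prove \eqref{b6} first by a pointwise argument directly from the definition of $M^\sharp$, and then deduce \eqref{b7} by decomposing the Campanato norm into its two defining pieces and handling the first with Lemma \ref{lm4} and the second with \eqref{b6}.

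For \eqref{b6}, fix a pair of doubling cubes $Q \subset R$ with $Q,R \in \cQ(\mu,2)$ and any point $x \in Q \cap \supp(\mu)$. By the very definition of $M^\sharp f_j$, one has $|m_Q(f_j)-m_R(f_j)|/K_{Q,R} \le M^\sharp f_j(x)$ for every $j$. Since $K_{Q,R}$ is a scalar independent of $j$, taking the $l^r$-norm of both sequences in $j$ preserves the inequality, giving
\[
\frac{\|m_Q(f_j)-m_R(f_j) \, : \, l^r\|}{K_{Q,R}} \le \|M^\sharp f_j(x) \, : \, l^r\|.
\]
Taking supremum over $x \in \R^d$ and then over admissible doubling pairs $Q \subset R$ produces \eqref{b6}. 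This step is entirely routine; the only point to highlight is that pulling the constant $K_{Q,R}$ through the $l^r$-norm is what allows the pointwise $M^\sharp$-bound to survive the vector-valued passage.

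For \eqref{b7}, recall that the Campanato norm $\|f_j \, : \, \cC^\infty_q(l^r,\mu)\|$ splits as the sum of a mean-oscillation term over $Q \in \cQ(\mu)$ (with weight $\mu(2Q)^{-1/q}$) and the doubling-cube term already handled by \eqref{b6}. For the first term, the key observation is that $\mu(\tfrac{3}{2}Q) \le \mu(2Q)$, so
\[
\mu(2Q)^{-1/q}\l(\int_Q \|f_j(x)-m_{Q^*}(f_j) \, : \, l^r\|^q\,d\mu(x)\r)^{1/q}
\le \l(\frac{1}{\mu(\tfrac{3}{2}Q)}\int_Q \|f_j(x)-m_{Q^*}(f_j) \, : \, l^r\|^q\,d\mu(x)\r)^{1/q},
\]
and the right-hand side is bounded by $C\sup_x \|M^\sharp f_j(x) \, : \, l^r\|$ via Lemma \ref{lm4}. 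Combining with \eqref{b6} for the second term yields \eqref{b7}.

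There is essentially no serious obstacle here; the statement is a clean vector-valued packaging of the scalar John--Nirenberg style estimate supplied by Tolsa's lemma together with the tautological bound coming from the second half of $M^\sharp$. The only mild point of care is to verify that each component of the Campanato seminorm matches, term by term, a quantity controlled by the vector-valued $M^\sharp$, which is exactly what the splitting above accomplishes.
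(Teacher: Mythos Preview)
Your proof is correct and follows essentially the same route as the paper: a pointwise bound from the second half of the definition of $M^\sharp$ yields \eqref{b6} after taking the $l^r$-norm, and \eqref{b7} is then obtained by splitting the Campanato seminorm and invoking Lemma~\ref{lm4} for the oscillation term. The paper's argument is slightly terser (it does not spell out the $\mu(\tfrac{3}{2}Q)\le\mu(2Q)$ comparison), but the substance is identical.
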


\begin{proof}
Fix $Q \subset R$ such that $Q \in \cQmu$.
Then
$\ds
\frac{|m_Q(f_j)-m_R(f_j)|}{K_{Q,R}} \le c\,M^\sharp f_j(x)
$
for all $x \in Q$.
By taking the $l^r$-norm of both sides we obtain
\[
\frac{\|m_Q(f_j)-m_R(f_j) \, : \, l^r\|}{K_{Q,R}}
\le c\,\sup_{x \in Q}\| M^\sharp f_j(x) \, : \, l^r \|
\le c\,\sup_{x \in \R^d}\| M^\sharp f_j(x) \, : \, l^r \|.
\]
Now since $Q$ and $R$ are taken arbitrarily,
\eqref{b6} is proved.
\eqref{b7} can be obtained
with the help of \eqref{b5} and \eqref{b6}.
\end{proof}

Before we conclude this section,
a remark may be in order.
\begin{remark}
Let $0<\al<n$.
For $Q,R \in \cQmu$ with $Q \subset R$
we define
\[
K_{Q,R}^{(\al)}
=1+\sum_{k=1}^{N_{Q,R}}
\l(\frac{\mu(2^kQ)}{\ell(2^kQ)^n}\r)^{\frac{n-\al}{n}},
\]
where $N_{Q,R}$ is the least integer $j$
with $2^jQ \supset R$.
For the definition of this constant 
we refer to \cite{ChSa}.
Theorems in this section still hold, 
if we replace $K_{Q,R}$
by $K_{Q,R}^{(\al)}$
whenever $1 \le q \le p<\infty$.
\end{remark}

\section*{Acknowledgement}
\noindent

The authors thank Prof.~E.~Nakai
for discussing this paper with them.


\begin{thebibliography}{999}

\bibitem{Ca1} S.~Campanato,
Propriet\`{a} di una famiglia di spazi funzionali. 
(Italian) Ann. Scuola Norm. Sup. Pisa (3) 18 1964 137--160. 

\bibitem{Ca2} S.~Campanato,
Propriet\`{a} di inclusione per spazi di Morrey. 
(Italian) Ricerche Mat. 12 1963 67--86.

\bibitem{ChSa} W.~Chen and E.~Sawyer,
A note on commutators of fractional integrals 
with ${\rm RBMO}(\mu)$ functions. 
Illinois J. Math. {\bf 46} (2002), 
no. 4, 1287-1298.

\bibitem{Fu}N.~Fujii,
A proof of the Fefferman-Stein-Str\"{o}mberg
inequality for the sharp maximal functions,
Proc. Amer. Soc. {\bf 106} (1991), no.2, 371--377.

\bibitem{Mo}C.~Morrey,
On solutions of quasi-linear elliptic partial differential equations,
Trans. Amer. Math. Soc. {\bf 43} (1938), 126--166.

\bibitem{Na}E.~Nakai,
The Campanato, Morrey and H\"{o}lder spaces 
on spaces of homogeneous type,
in preparation.

\bibitem{SaTa1}
Y.~Sawano and H.~Tanaka,
Morrey spaces for non-doubling measures, 
Acta Math. Sinica {\bf 21} no.6, 1535--1544. 

\bibitem{SaTa2}
Y.~Sawano and H.~Tanaka,
Sharp maximal inequalities and 
commutators on Morrey spaces with non-doubling measures,
to appear in Taiwanese Math.J..

\bibitem{SaTa9}
Y.~Sawano and H.~Tanaka,
The John-Nirenberg type inequality for non-doubling measures,
{\it preprint}.

\bibitem{To2} X.~Tolsa, 
BMO, $H^1$, and Calder\'{o}n-Zygmund 
operators for non doubling measures, 
Math. Ann. {\bf319} (2001), 89-149.

\bibitem{To3} X.~Tolsa, 
The space $H^1$ for 
nondoubling measures
in terms of a grand maximal operator, 
Trans. Amer. Math. Soc., 
{\bf355} (2003), 315--348. 

\end{thebibliography}
\end{document}